\documentclass{amsart}
\usepackage{amsmath}
\usepackage{amsthm}
\usepackage{amsfonts}
\newtheorem{theorem}{Theorem}
\newtheorem{lemma}[theorem]{Lemma}
\author{Paul Myer Kominers and Scott Duke Kominers}
\title{Candy-Passing Games on General Graphs, I}
\thanks{The second author gratefully acknowledges the support of a Harvard Mathematics Department Highbridge Fellowship.}
\address{\newline\indent Student, Department of Mathematics, Massachusetts Institute of Technology\newline\indent c/o 8520 Burning Tree Road\newline \indent Bethesda, MD 20817}
\email{pkoms@mit.edu}
\address{\newline\indent Student, Department of Mathematics, Harvard University}
\email{kominers@fas.harvard.edu}

\subjclass[2000]{05C35 (Primary); 37B15 (Secondary)}
\keywords{candy-passing, chip-firing, stabilization, graph game}

\begin{document}
\maketitle
We let $G$ be an undirected graph and denote the vertex and edge sets of $G$ by $V(G)$ and
$E(G)$, respectively.  The
\emph{candy-passing game on $G$} is defined by the following rules:
\begin{itemize}
\item At the beginning of the game, $c>0$ candies are distributed
  among $|V(G)|$ students, each of whom is seated at some distinct vertex
  $v\in V(G)$.
\item A whistle is sounded at a regular interval.
\item Each time the whistle is sounded, every student who is able to do
  so passes one candy to each of his neighbors.  (If at the beginning of
  this step a student holds fewer candies than he has neighbors, he
  does nothing.)
\end{itemize}

The candy-passing game was first introduced by Tanton \cite{Candy
  Passing}, who defined the game for cyclic $G$.  Tanton and Wagon proved that if $G$ is an $n$-cycle then
  any candy-passing game on $G$ with fewer than $n$ candies terminates (see
  \cite{Candy Passing 2}).  The first author \cite{PKoms} also studied the end
  behavior of candy-passing games on such $G$, showing that if the number
  of candies $c$ is at least $3n-2$, then the configuration of candies
  eventually stabilizes.

Here, we undertake the first study of the candy-passing game on
arbitrary connected graphs $G$.  We obtain a general stabilization
result which encompasses the first author's \cite{PKoms} results for $c\geq 3n$.

\subsection*{Preliminaries}We call the interval between soundings of the whistle a \emph{round} of
candy-passing.   Dropping the student metaphor, we will treat the candy
piles as belonging to the vertices of the graph $G$.
If, after some round, the amount of candy held by a
given vertex will remain constant throughout all future rounds of the candy-passing
game, that vertex is said to have \emph{stabilized}.

We denote the degree of vertex $v\in V(G)$ by $\deg(v)$.  Clearly, if some vertex $v\in V(G)$ has $k\geq \deg(v)$ candies at the
beginning of a round, that vertex cannot end the round with more than
$k$ candies. 
Indeed, such a vertex will pass $\deg(v)$ pieces of candy to its
neighbors and can, at most, receive one piece of candy from each of its
$\deg(v)$ neighbors. 

Finally, we say that a
vertex $v\in V(G)$ is \emph{abundant} if it holds at least $2\deg(v)$ pieces of candy.  This definition implies:
\begin{lemma}
\label{Fixed Depletions}
After a finite number of rounds of the candy-passing game on $G$, the
set of abundant vertices of $G$ is fixed and each abundant vertex has
stabilized.
\end{lemma}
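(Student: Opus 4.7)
The plan is to pass to the eventually periodic regime of the game and then establish a dichotomy: either no vertex is ever abundant from some point on, or every vertex has stabilized with a constant candy count. Write $c_v(t)$ for the number of candies held by $v$ at the start of round $t$. Since each of the $|V(G)|$ vertices holds between $0$ and $c$ candies, the configuration space is finite, and the deterministic update forces the trajectory to enter a cycle; I would fix $T \geq 0$ and $p \geq 1$ with $c_v(t+p) = c_v(t)$ for every $v \in V(G)$ and every $t \geq T$. Let $f(v)$ denote the number of rounds in which $v$ fires during one period of this cycle. Since the net change of $c_v$ across one period is zero, $\deg(v) f(v) = \sum_{u \sim v} f(u)$, so $f$ is discretely harmonic; because $G$ is finite and connected, the discrete maximum principle forces $f \equiv m$ for some integer $0 \leq m \leq p$.

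The core of the argument is to show that if some vertex $v_0$ satisfies $c_{v_0}(t_0) \geq 2\deg(v_0)$ at any round $t_0 \geq T$, then in fact $m = p$. Let $M = \max_{t \geq T} c_{v_0}(t) \geq 2\deg(v_0)$, pick $t^* \geq T + p$ achieving this maximum, and examine round $t^* - 1$ in reverse. If $v_0$ did not fire at round $t^* - 1$, then $c_{v_0}(t^* - 1) < \deg(v_0)$, so $c_{v_0}(t^*) \leq c_{v_0}(t^* - 1) + \deg(v_0) < 2\deg(v_0) \leq M$, contradicting $c_{v_0}(t^*) = M$. So $v_0$ fired at round $t^* - 1$; writing $c_{v_0}(t^*) = c_{v_0}(t^* - 1) - \deg(v_0) + r$ with $0 \leq r \leq \deg(v_0)$, maximality of $M$ forces $r = \deg(v_0)$ and $c_{v_0}(t^* - 1) = M$. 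Iterating this reverse step through a full period (which fits inside $[T, t^*]$ by the choice of $t^*$), I conclude that $c_{v_0}$ is identically $M$ on the periodic phase and that every neighbor of $v_0$ fires in every round; hence $f(u) = p$ for each $u \sim v_0$, and combining with the harmonicity of $f$ gives $m = p$.

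If $m = p$, then every vertex fires in every round, so $c_v(t+1) = c_v(t) - \deg(v) + \deg(v) = c_v(t)$: every vertex has stabilized and the set of abundant vertices is eventually constant. In the complementary case, no vertex is ever abundant during the periodic phase, so the set of abundant vertices is eventually empty and both conclusions hold vacuously. The main obstacle I anticipate is the backward-induction step in the middle paragraph; the remaining pieces are standard chip-firing bookkeeping (finite state space plus a harmonic firing vector on a connected graph), but the maximality-based reverse sweep that propagates abundance backward in time and outward to neighbors is the delicate piece to get right.
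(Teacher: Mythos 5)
Your proof is correct, but it takes a genuinely different route from the paper's. The paper's argument is a two-line monovariant: a vertex holding fewer than $2\deg(v)$ candies at the start of a round can never end it with $2\deg(v)$ or more, so the set of abundant vertices only shrinks and each abundant vertex's candy count is nonincreasing; hence the total candy held by abundant vertices is a nonnegative, nonincreasing integer that strictly decreases whenever an abundant vertex loses candy, and all such losses must cease after finitely many rounds. You instead deploy the standard chip-firing toolkit: eventual periodicity from the finite state space, harmonicity of the per-period firing count $f$ (hence $f\equiv m$ on a connected graph by the maximum principle), and a backward maximality sweep showing that any abundant vertex in the periodic regime forces $m=p$ and therefore a global fixed point. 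The sweep is carried out correctly --- choosing $t^*\geq T+p$ guarantees a full period of reverse steps, and the bound $M\geq 2\deg(v_0)$ is exactly what rules out the non-firing case at each step --- and the dichotomy (either no vertex is ever abundant after time $T$, or the configuration is eventually constant) does deliver both conclusions of the lemma. What your approach buys is a much stronger structural statement: a single abundant vertex in the periodic regime forces \emph{every} vertex of a connected $G$ to stabilize, which essentially absorbs the neighbor-propagation argument the paper defers to the proof of Theorem \ref{thmP}. What it costs is length, and a reliance on connectedness that the lemma as stated does not assume; this is harmless (run the argument on each connected component separately, since candy never crosses between components, and the paper only applies the lemma to connected $G$), but it is worth flagging explicitly.
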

\begin{proof}
The total amount of candy on abundant vertices is nonincreasing.
Furthermore, whenever an abundant vertex loses candy, that total
decreases.  Since the total amount of candy on abundant vertices cannot
fall below zero, the amount of candy that can be lost by abundant
vertices must be finite, so that the set of abundant vertices and the
amount of candy on each such vertex must eventually become fixed.
\end{proof}

\subsection*{Main Result.}We may now prove our stabilization
  theorem:

\begin{theorem}\label{thmP}
Let $G$ be connected. In any candy-passing game on $G$ with
$$c\geq4|E(G)|-|V(G)|$$ candies, every vertex $v\in V(G)$ will eventually stabilize.
\end{theorem}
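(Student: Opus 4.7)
The plan is to show that the candy-passing dynamics on $G$ is eventually periodic, and then use a discrete harmonic-function argument to force the period to be trivial.

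The first step is to apply Lemma~\ref{Fixed Depletions} to conclude that after finitely many rounds the set $A$ of abundant vertices has stabilized to some fixed subset of $V(G)$, and each $a\in A$ holds a constant amount of candy. Because total candy is conserved, the total on $V(G)\setminus A$ is thereafter constant; and since every non-abundant vertex holds at most $2\deg(v)-1$ candies, only finitely many configurations are reachable. The dynamics being deterministic, it is eventually periodic; let $T\geq 1$ denote its period.

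The heart of the argument is the following observation. For each $v\in V(G)$, let $P_v$ denote the number of rounds within one period during which $v$ passes candy, and write $p_u(t)\in\{0,1\}$ for the indicator that $u$ passes in round $t$. The single-round balance
\[
c_v(t+1)-c_v(t) \;=\; \sum_{u\sim v} p_u(t) \;-\; \deg(v)\,p_v(t),
\]
summed over one period and combined with the periodicity of $c_v$, yields
\[
\deg(v)\,P_v \;=\; \sum_{u\sim v} P_u \qquad \text{for every } v\in V(G),
\]
so $P$ is a discrete harmonic function on the connected graph $G$. The standard maximum-principle argument (picking a vertex where $P$ attains its maximum and spreading via neighbors) then forces $P_v\equiv M$ for some constant $M$.

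To finish, I would show $M=T$ by splitting on whether $A$ is empty. If $A\neq\emptyset$, any $a\in A$ has $c_a\geq 2\deg(a)\geq\deg(a)$ and hence passes every round, so $P_a=T$. If instead $A=\emptyset$ in the periodic regime, then $c_v\leq 2\deg(v)-1$ everywhere; summing yields $c\leq 4|E(G)|-|V(G)|$, and the hypothesis forces equality $c_v=2\deg(v)-1$ for every $v$ at every time, so (assuming $|V(G)|\geq 2$, which makes $\deg(v)\geq 1$) every vertex again passes every round. Either way $M=T$, meaning every vertex passes in every round of the period. Consequently each vertex loses exactly $\deg(v)$ candies and receives exactly $\deg(v)$ candies per round, so $c_v$ is constant---which is precisely the desired stabilization.

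The main obstacle I expect is recognizing the period-wise passing counts $P_v$ as a discrete harmonic function, so that the maximum principle on the connected graph $G$ collapses the period to the trivial one. The accompanying bookkeeping---that the bound $c\geq 4|E(G)|-|V(G)|=\sum_v(2\deg(v)-1)$ supplies at least one vertex with $P_v=T$ via the dichotomy on $A$---is routine once the balance equation is in hand.
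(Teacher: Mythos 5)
Your proposal is correct, and it reaches the theorem by a genuinely different route than the paper. The paper argues locally and monotonically: after Lemma~\ref{Fixed Depletions}, either no abundant vertex survives (forcing the degenerate configuration $c_v = 2\deg(v)-1$ everywhere, which is already stable), or some stabilized abundant vertex remains; since that vertex passes $\deg$ candies every round yet stays constant, each neighbor must pass every round, hence holds at least $\deg(v)$ candies, hence cannot gain candy and so is eventually constant --- and this ``passes every round, therefore nonincreasing, therefore stabilizes'' property propagates outward through the connected graph. You instead argue globally: finiteness of the post-Lemma state space (non-abundant piles bounded by $2\deg(v)-1$, abundant piles frozen) gives eventual periodicity; telescoping the balance equation over a period shows the per-period firing counts $P_v$ are harmonic, so the maximum principle on the connected graph makes them constant; and the hypothesis $c \geq 4|E(G)|-|V(G)| = \sum_v (2\deg(v)-1)$ enters only at the very end, to exhibit one vertex with $P_v = T$ via the same dichotomy on whether abundant vertices survive. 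Each step of yours checks out (including the telescoping identity, the equality case when no abundant vertex remains, and the parenthetical $|V(G)|=1$ degeneracy). The paper's argument is shorter and more elementary; yours is the standard chip-firing/sandpile technique of ``all sites fire equally often in a periodic orbit,'' which cleanly isolates where the candy bound is actually used and shows, as a byproduct, that every candy-passing game on a connected graph is eventually periodic with uniform firing frequency regardless of $c$.
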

\begin{proof}
As a consequence of Lemma \ref{Fixed Depletions}, we may assume that all
candy that will be lost by abundant vertices over the course of the game
has been lost, as this must happen within finitely many rounds.  If
there are no abundant vertices at this point, then the 
condition $c\geq 4|E(G)|-|V(G)|$ implies $c=4|E(G)|-|V(G)|$ and that
every vertex $v\in V(G)$ has $2\deg(v)-1$ candies.  In this case, all the
vertices of $G$ have stabilized.

We now assume that at least one abundant vertex remains.  (Unless we are in the situation addressed in the prior paragraph, this is guaranteed by the condition $c\geq 4|E(G)|-|V(G)|=\sum_{v\in V(G)}\left(2\deg(v)-1\right)$.) This vertex has stabilized, and so it must be receiving candy from all of
its neighbors every round.  Each of its neighbors $v$, therefore, must hold at least
$\deg(v)$ pieces of candy every round.  It follows that these neighbors must eventually
stabilize, since these vertices pass candy every round and no such vertex may end a round with more candy than it
began with.   By an identical argument, the neighbors of these vertices must also pass candy every
round, and so they, too, must eventually stabilize.  As $G$ is connected, continuing
this argument shows that all the vertices of $G$ must eventually stabilize.
\end{proof}

\subsection*{Remarks}When $G$ is an $n$-cycle, the
condition $c\geq 4|E(G)|-|V(G)|$ is equivalent to the condition $c\geq
3n$.  Our Theorem \ref{thmP} generalizes the results of
\cite{PKoms} for $n$-cycles with at least $3n$ candies.  More generally,
if $G$ is connected and $k$-\emph{regular} then the condition of Theorem
\ref{thmP} simplifies to $c\geq (2k-1)|V(G)|$.

\end{document}